\newcommand{\HF}{H_{F_{i_k}^{k}}}                                          \newcommand{\Hf}{H_{f,U_{i_k}}}
\newcommand{\Hp}{H_{\psi_{i_{k}}}}  
\newcommand{\Hpi}{H_{\psi_{i}}} 
\newcommand{\xki}{x_{k,i_{k}}}
\newcommand{\xkip}{x_{k,i_{k}-1}}     
\newcommand{\alphaki}{\alpha_{i_{k}}}  
\newcommand{\dkik}{d_{k}^{(i_{k})}}  
\newcommand{\xkik}{x_{k}^{(i_{k})}}
\newcommand{\xkikn}{x_{k+1}^{(i_{k})}}  
\newcommand{\xbarik}{\bar{x}_{k}^{(i_{k})}}  
\newcommand{\HfW}{H_{f,W}}
\newcommand{\HfV}{H_{f,V}} 
\newcommand{\Lik}{L_{i_{k}}^{k}} 
\newcommand{\Hpsimax}{L_{\psi,\max}}    
\DeclareMathOperator*{\argmin}{arg\,min}
\newtheorem{definition}{Definition}
\newtheorem{theorem}{Theorem}
\newtheorem{assumption}[theorem]{Assumption}
\newtheorem{lemma}{Lemma}
\title{\LARGE \bf  Coordinate projected  gradient descent  minimization and its application to orthogonal nonnegative matrix factorization }
\author{Flavia Chorobura$^{1}$, Daniela Lupu$^{1}$, Ion Necoara$^{1}$
	\thanks{$^{1}$Automatic Control and System Engineering Department, University Politehnica  Bucharest, Spl. Independentei, 060042 Bucharest, Romania, Emails: {\tt\small flavia.chorobura@stud.acs.upb.ro, daniela.lupu@upb.ro, ion.necoara@upb.ro.}}%
}
\begin{document}

	\maketitle
	\thispagestyle{empty}
	\pagestyle{empty}

	\begin{abstract}
		In this paper we consider large-scale composite nonconvex  optimization problems having  the  objective function formed as a sum of three  terms, first has block coordinate-wise Lipschitz continuous gradient, second  is twice differentiable but nonseparable and third is the indicator function of some separable closed convex set.  Under these general settings we derive and analyze a new cyclic coordinate descent method, which uses the partial gradient of the differentiable part of the  objective, yielding a  coordinate gradient descent scheme with a novel adaptive stepsize rule. We prove that this stepsize rule makes the coordinate gradient   scheme a descent method, provided that additional assumptions hold for the second term in the objective function.  We also  present a worst-case complexity analysis for this new method in the nonconvex settings.  Numerical results on orthogonal nonnegative matrix factorization problem also  confirm the  efficiency of our  algorithm.  
	\end{abstract}


	\section{INTRODUCTION}
	\noindent In this paper we study a cyclic (block) coordinate gradient  descent method for solving the  large-scale  composite optimization problem: 
	\begin{equation}
		\label{eq:prob}
		F^* = \min_{x \in \mathbb{R}^{n}} F(x) := f(x) + \psi(x) + \phi(x) ,
	\end{equation}
	where $f: \mathbb{R}^n  \to \mathbb{R}$  has block coordinate-wise Lipschitz gradient, $\psi : \mathbb{R}^n  \to {\mathbb{R}}$ is twice differentiable  (both functions possibly nonseparable and  nonconvex),  and $\phi : \mathbb{R}^n  \to {\mathbb{R}}$ is the indicator function of a convex closed separable set $Q =  \Pi_{i = 1}^{n} Q_{i}$.  Optimization problems having this composite structure arise in many applications such as   orthogonal nonnegative matrix factorization \cite{AhoHie:21} and distributed control  \cite{NecCli:13}.   When the dimension of these problems is large,  the usual methods based on full gradient and Hessian  perform poorly.  Hence, it is reasonable  to solve such large-scale problems using  (block) coordinate descent methods. 
	
	\medskip 
	
	
	
	\noindent \textit{Previous work}. There exist  few studies  on  coordinate descent methods when the second term in the objective function is nonseparable. For example,  \cite{Nec:13,NecTak:20,TseYun:09} considers  the composite optimization problem \eqref{eq:prob}  with $\psi$ convex and  separable (possibly nonsmooth) and $\phi$ the indicator function of the set  $\{x: Ax=b\}$.  Hence,   nonseparability comes from the linear constraints.  In these settings, \cite{Nec:13,NecTak:20,TseYun:09}  proposed coordinate proximal gradient descent methods that require solving at each iteration  a subproblem over a  subspace using  a part of the gradient of $f$ at the current feasible  point. For these algorithms  sublinear rates are derived in the (non)convex case and linear convergence is obtained for  strongly convex objective. Further, for  optimization problem    \eqref{eq:prob} with $\psi$ convex, nonseparable and nonsmooth,  \cite{LatThe:21} considered a proximal coordinate descent type method, where, however, at each iteration one needs to evaluate to full prox of $\psi$.  Note that since $\psi$ is nonsmooth, we can consider $\phi \equiv 0$.  Linear convergence results were  derived in \cite{LatThe:21}  when the objective function satisfies the Kurdyka-Lojasiewicz (KL) property.  Recently, \cite{AbeBec:21} considers the optimization problem \eqref{eq:prob},  with   $f$ quadratic  convex function  and $\psi$ convex function  (possibly nonseparable and nonsmooth). Hence, $\phi$ can be included in $\psi$.  In these settings,  \cite{AbeBec:21}  used  the forward-backward envelope to smooth the original problem and then solved the smooth approximation with an accelerated coordinate gradient descent method.   Sublinear rates were derived for this scheme, provided that the second function $\psi$ is Lipschitz continuous. However, in this method it is also necessary to compute the full prox of function $\psi$.  In  \cite{NecCho:21}, a stochastic coordinate  proximal gradient method is proposed, where  at each iteration one needs to sketch  the gradient   $\nabla f$  and compute the  prox of $\psi$  along some subspace  generated by a random matrix. Finally, in  \cite{NecCho:21} we consider  $\phi \equiv 0$. Assuming that $\psi$ is twice differentiable,  (sub)linear convergence rates are derived  for both convex and nonconvex settings. However, \cite{NecCho:21} requires the computation of a prox along some subspace, which still  can be prohibitive in some applications (including nonnegative matrix factorization).  
	
	\medskip 
	
	\noindent \textit{Contributions}.  This paper deals with large-scale composite  nonconvex optimization problems having  the  objective function formed as a sum of three  terms, see  \eqref{eq:prob}.   Under these  general settings, we present  a cyclic coordinate gradient  descent method  and derive convergence rates in function values.  More precisely, our   main contributions are:\\
	(i) We design a cyclic coordinate projected gradient method,  which requires at each iteration the computation of a block of components of the gradient of the differentiable part of the objective function. We propose a new stepsize strategy for this method,
	which guarantees descent and convergence under certain boundedness assumptions on hessian of  $\psi$. In particular, our stepsize rule is \emph{adaptive} and requires the computation of  a positive root of a given polynomial.\\
	(ii) We  prove that our algorithm is descent method and derive  convergence rates in function values. More precisely,  under Kurdyka-Lojasiewicz (KL) property, we derive sublinear, linear or superlinear rates  depending on the KL parameter.\\ 
	(iii) We show that the orthogonal nonnegative matrix factorization problem, see  \cite{ChiLu:19}, fits into our settings and thus we can solve it using our algorithm. Preliminary numerical results confirm the  efficiency of our method on this application.
	
	
	
	\medskip 
	
	\noindent \textit{Content}. The paper is organized as follows.    In Section II we present some definitions and preliminary results. In Section III we introduce our  coordinate projected gradient  algorithm and derive convergence rates in function values. Finally, in Section IV we provide detailed numerical simulations on the orthogonal nonnegative matrix factorization problem.


	\section{Preliminaries}
	\noindent  In this section we present our basic assumptions for composite problem  \eqref{eq:prob},   some definitions and  preliminary results.  We consider the following problem setting.  Let $U \in \mathbb{R}^{n\times n}$ be  a column permutation of the $n \times n$ identity matrix and further let $U = [U_{1},...,U_{N}]$ be a decomposition of $U$ into $N$ submatrices, with $U_{i} \in \mathbb{R}^{n \times n_{i}}$, where $\sum_{i = 0}^{N}n_{i} = n$. Any vector $x \in \mathbb{R}^n$ can be written uniquely as $x = \sum_{i = 0}^{N} U_{i} x^{(i)}$, where $x^{(i)} = U_{i}^{T}x \in \mathbb{R}^{n_{i}}$. 
	Throughout the paper the following assumptions will be valid:
	
	\begin{assumption} 
		\label{ass1}
		For composite optimization problem \eqref{eq:prob} the following assumptions hold:\\
		A.1: Gradient of $f$ is coordinate-wise Lipschitz continuous:
		\begin{align}
			\label{lip1}
			\|U_{i}^{T}(\nabla f(x + U_{i} h) - \nabla f(x)) \| \leq L_{i}\left(x^{\neq i}\right) \|h\|
		\end{align}
		\noindent for all $x \in \mathbb{R}^{n},  h \in \mathbb{R}^{n_{i}}$ and  $i = 1:N$. \\	 
		A.2:  Function $\psi$ is twice continuously differentiable (possibly nonseparable and nonconvex). Moreover, we assume that  there exist   integer $p\geq1$ and constants  $\Hpi > 0$ such that
		\begin{equation*}
			\| U_{i}^{T} \nabla^2 \psi (y) U_{i}\| \leq \Hpi \|y\|^{p}  \quad \forall y \in \mathbb{R}^{n} \quad i = 1:N.  
		\end{equation*} 
		A.3: Function $\phi $ is the indicator function of a nonempty closed convex separable set  $Q \subseteq  \mathbb{R}^{n}$, i.e.,
		\vspace*{-0.2cm}
		\begin{equation*}
			Q =  \Pi_{i = 1}^{N} Q_{i}, \quad \text{with} \quad Q_{i} \subseteq \mathbb{R}^{n_{i}}, \;\;  i=1:N.
			\vspace*{-0.2cm}
		\end{equation*}	
		A.4: A solution exists for \eqref{eq:prob}  (hence,   $F^* > -\infty$).
	\end{assumption}
	
	
	\noindent  In Assumption \ref{ass1}.[A1]  Lipschitz  constants $L_{i}\left(x^{\neq i}\right)$ may depend on $x^{(j)}$ for $j \neq i$.  
Further, let us  define:
	\vspace*{-0.2cm}
	\begin{equation}
		\label{eq:subg}
		S_F(x) = \text{dist}(0, \partial F(x)) \;  \left(:= \inf_{F_x \in \partial F(x)} \| F_x\| \right).\\
		\vspace*{-0.2cm}
	\end{equation}
	
	\noindent Let us define next Kurdyka-Lojasiewicz (KL) property  \cite{BolDan:07}. 
	
	
	\begin{definition}
		\label{def:kl}
		\noindent A proper and lower semicontinuous  function $F$ satisfies  \textit{Kurdyka-Lojasiewicz (KL)} property if for every compact set $\Omega \subseteq \text{dom} \, F$ on which $F$ takes a constant value $F_*$ there exist $q, \sigma_q, \delta, \epsilon >0$ such that   one has:
		\vspace*{-0.2cm}
		\begin{equation}
			\label{eq:kl}
			F(x) - F_*  \leq \sigma_q  S_{F}(x)^q, 
			\vspace*{-0.2cm}
		\end{equation}
		for all $x$ such that $\text{dist}(x, \Omega) \leq \delta, \; 
		F_* < F(x) < F_* + \epsilon$.
	\end{definition}     
	
	\medskip 
	
	\noindent  The basic idea of our algorithm consists of updating  $i$th component of  $x\in\mathbb{R}^{n}$   in a cyclic manner as $x^+ = x+ U_{i}d$,  for some appropriate direction  $d$.  Let us fix some notations first.  We denote $x_{k,0} = x_{k}$ and then $\xki = \left(x_{k+1}^{(1)},\cdots,x_{k+1}^{(i_{k})},x_{k}^{(i_{k}+1)}, \cdots, x_{k}^{(N)}\right)$. Further, let us define the differentiable function $h(x) = f(x) + \psi(x)$. For simplicity of the  exposition let us  denote $\Lik = L_{i_{k}}\left(\xkip^{\neq i_{k}}\right)$. Also, let  $\phi_{i}$ denote the indicator function of the individual  set $Q_i$ and thus $\phi (x) = \sum_{i=1}^N \phi_i(x^{(i)})$.


	\section{Coordinate projected gradient algorithm}
	\noindent In this section, we present a cyclic coordinate projected gradient descent algorithm for solving problem \eqref{eq:prob},  with $f$ and $\psi$ possibly nonseparable and nonconvex and $\phi$ the indicator function of a separable convex set.   Contrary to the usual approach from literature, the algorithm  disregards the composite form of the objective function and makes an update based on  the partial gradient  of the differentiable term  $\nabla h$ (recall  that in the literature, the convergence of  coordinate gradient descent methods is guaranteed only when $f$ is smooth and  $\psi$ is separable or $\psi \equiv 0$).  Hence, our cyclic Coordinate Projected  Gradient Descent (CPGD) algorithm~is: 
	\begin{algorithm}[H]	
		\caption{CPGD} \label{alg:RCPG}
		\begin{algorithmic} 	
			\STATE Given a starting point $x_{0} \in Q$
			\FOR {$k \geq 0$}
			\FOR {$i_{k} = 1:N$}
			\STATE 1. Choose $\Hf > \frac{\Lik}{2}$ and compute the positive root $\alphaki \geq 0$ of the following polynomial:
			\begin{align}
				&2^{p-1} \Hp \alpha^{p+1} + (2^{p-1}  \Hp \|\xkip\|^{p} + \Hf)\alpha \nonumber \\ &  = 
				\|U_{i_{k}}^{T} \nabla h(\xkip)\| . \label{eq:100}
			\end{align}
			\STATE 2. Update the stepsize in an adaptive fashion:
			\begin{equation*}
				\HF \!=\! 2^{p-1} \Hp \! \|\xkip\|^p + 2^{p-1} \Hp \! \alphaki^p + \Hf. 
			\end{equation*}
			\STATE 3. Solve the subproblem (projection):
			\begin{align}
				\label{eq:subproblem2}
				\dkik = \argmin_{d \in \mathbb{R}^{n_{i}}} & \langle U^{T}_{i_k} \nabla h(\xkip), d \rangle + \frac{\HF}{2} \|d\|^2   \nonumber\\
				&+ \phi_{i_{k}}(x_{k}^{(i_{k})} + d).
			\end{align}
			\STATE 4. Update $x_{k+1}^{(i_{k})} = x_{k}^{(i_{k})} + \dkik$.
			\ENDFOR
			\ENDFOR
		\end{algorithmic} 
	\end{algorithm}
 \vspace*{-0.2cm}
	\noindent The main difficulty with the algorithm CPGD is that we need to find an appropriate stepsize $\HF$ which ensures descent, although the differentiable part $h$ of the objective function  does not have a coordinate-wise Lipschitz gradient.  In the sequel we prove that   the  stepsize choice for $\HF$ from the algorithm CPGD combined with additional properties on $\psi$  yields descent. Note that \eqref{eq:subproblem2} can be written as:
	\begin{equation}
		\xbarik = \xkik - \dfrac{1}{\HF}  U^{T}_{i_k} \nabla h(\xkip) \label{eq:xbar}
	\end{equation}
	and
	\vspace*{-0.2cm}
	\begin{equation}
		\xkikn = \text{proj}_{Q_{i_{k}}}\left( \xbarik \right). \label{eq:proj}
	\end{equation}
	Moreover, from \eqref{eq:100}, we have:
	\begin{align*}
		&\left(2^{p-1}\Hp \alphaki^{p} + 2^{p-1}\Hp \|\xkip\|^{p} + \Hf \right) \alphaki\\
		& =  \|U_{i_{k}}^{T} \nabla h(\xkip)\|.
	\end{align*}
	\noindent Hence, using \eqref{eq:xbar}, we obtain:
	\begin{align}
		& \left\|\xbarik - \xkik  \right\| = \dfrac{1}{\HF}\|U_{i_{k}}^{T} \nabla h(\xkip)\| \nonumber \\
		&= \dfrac{\|U_{i_{k}}^{T} \nabla h(\xkip)\|}{2^{p-1}\Hp \alphaki^{p} + 2^{p-1}\Hp \|\xkip\|^{p} + \Hf}  = \alphaki. \label{eq:102}
	\end{align}
	
	\noindent Note that it is easy to show that the polynomial equation has a nonnegative root $\alpha_{k}$. 
	Let us define the following parameters:
	\vspace*{-0.2cm}
	\begin{equation}
		\Hf = \dfrac{\Lik + \eta_{i_{k}}}{2} \quad \text{and} \quad \eta_{\min} = \min_{k \in \mathbb{N} } \eta_{i_{k}}.  \label{eq:75}
	\end{equation}
	
	\begin{lemma}
		\label{lemma:1}
		If Assumption 1 holds, then the iterates of algorithm CPGD satisfy the following descent:
		\begin{align*}
			F(x_{k+1}) \leq F(x_{k}) - \dfrac{\eta_{\min}}{2} \|x_{k+1} - x_{k}\|^2. 
		\end{align*}
	\end{lemma}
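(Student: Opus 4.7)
The plan is to establish a one-step descent $F(\xki) - F(\xkip) \leq -(\eta_{\min}/2)\|\dkik\|^2$ at each inner iteration $i_k = 1:N$ and then sum across the cycle. Since each inner step modifies only the block $i_k$ and the submatrices $U_{i_k}$ have orthogonal columns, $x_{k+1}-x_k = \sum_{i_k=1}^{N} U_{i_k}\dkik$, so $\|x_{k+1}-x_k\|^2 = \sum_{i_k=1}^{N}\|\dkik\|^2$, and the per-step bounds telescope into the statement.

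To derive the per-step descent I would handle the three components of $F = f + \psi + \phi$ separately along the displacement $U_{i_k}\dkik$. For $f$, Assumption~\ref{ass1}.[A.1] yields the standard block descent lemma
\begin{align*}
f(\xki) \leq f(\xkip) + \langle U_{i_k}^T\nabla f(\xkip), \dkik\rangle + \frac{\Lik}{2}\|\dkik\|^2.
\end{align*}
For $\psi$, since it is $C^2$, I would use its second-order Taylor expansion along $U_{i_k}\dkik$, bound the reduced Hessian $U_{i_k}^T\nabla^2\psi(\cdot)U_{i_k}$ via Assumption~\ref{ass1}.[A.2], and combine the triangle inequality on the segment $[\xkip,\xki]$ with $(a+b)^p\leq 2^{p-1}(a^p+b^p)$ to get
\begin{align*}
\psi(\xki) - \psi(\xkip) &\leq \langle U_{i_k}^T\nabla\psi(\xkip), \dkik\rangle \\
&\quad + 2^{p-2}\Hp\bigl(\|\xkip\|^p + \|\dkik\|^p\bigr)\|\dkik\|^2.
\end{align*}
For $\phi$, I would exploit that $\dkik$ is the minimizer of the strongly convex subproblem \eqref{eq:subproblem2}; testing against $d=0$ gives
\begin{align*}
\langle U_{i_k}^T\nabla h(\xkip), \dkik\rangle + \frac{\HF}{2}\|\dkik\|^2 + \phi_{i_k}(\xkikn) \leq \phi_{i_k}(\xkik).
\end{align*}

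Summing the three displays delivers an upper bound on $F(\xki) - F(\xkip)$. Substituting the explicit formula for $\HF$ from Step~2 of CPGD, the $\|\xkip\|^p$ contributions cancel exactly, leaving a coefficient of $\|\dkik\|^2$ of the form $(\text{linear in }\Lik, \Hf) + 2^{p-2}\Hp\bigl(\|\dkik\|^p - \alphaki^p\bigr)$. The main obstacle is to sign-control this residual, and for that I would use the projection step \eqref{eq:proj}: since $\mathrm{proj}_{Q_{i_k}}$ is nonexpansive and $\xkik \in Q_{i_k}$,
\begin{align*}
\|\dkik\| = \|\xkikn - \xkik\| \leq \|\xbarik - \xkik\| = \alphaki,
\end{align*}
where the last equality is the identity \eqref{eq:102} produced from the polynomial root equation \eqref{eq:100}. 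This forces $\|\dkik\|^p - \alphaki^p \leq 0$, which neutralizes the Hessian-growth residual, while the identification \eqref{eq:75} converts the remaining linear piece into $-\eta_{i_k}/2 \leq -\eta_{\min}/2$. The key conceptual difficulty is precisely this bridging step: the Taylor bound for $\psi$ naturally produces $\|\dkik\|^p$, whereas the adaptive stepsize $\HF$ is tuned with $\alphaki^p$, and only the projection nonexpansion closes the gap. This is the single point where the polynomial root choice, the specific shape of $\HF$, and the projection subproblem must all conspire to yield descent.
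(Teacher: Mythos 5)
Your overall architecture matches the paper's: a per-block descent estimate obtained by splitting $F$ into $f$, $\psi$, $\phi$; the key use of nonexpansiveness of the projection (with $\xkik\in Q_{i_k}$) to get $\|\dkik\|\le\|\xbarik-\xkik\|=\alphaki$, so that the $\|\dkik\|^p$ residual is dominated by the $\alphaki^p$ term built into $\HF$; and summation over the blocks using orthogonality of the $U_{i_k}$. Your treatment of $\psi$ via a second-order Taylor expansion is valid and in fact slightly sharper than the paper's double mean-value argument (constant $2^{p-2}\Hp$ versus $2^{p-1}\Hp$).

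However, there is a genuine gap in your treatment of $\phi$, and it breaks the final bound. Comparing the subproblem objective at $\dkik$ and at $d=0$ gives only the inequality you wrote, with $\frac{\HF}{2}\|\dkik\|^2$ on the left. Adding your three displays then leaves a coefficient of $\|\dkik\|^2$ equal to $\frac{\Lik}{2}-\frac{\Hf}{2}+2^{p-2}\Hp\left(\|\dkik\|^p-\alphaki^p\right)$; after discarding the nonpositive Hessian residual and substituting \eqref{eq:75}, the ``linear piece'' is $\frac{\Lik}{2}-\frac{\Hf}{2}=\frac{\Lik-\eta_{i_k}}{4}$, not $-\frac{\eta_{i_k}}{2}$. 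This is positive whenever $\eta_{i_k}<\Lik$, which is the typical regime, so no descent follows from your stated inequalities. The repair is to extract the full $-\HF\|\dkik\|^2$ from the subproblem: either invoke the $\HF$-strong convexity of its objective when comparing with $d=0$ (which contributes an additional $\frac{\HF}{2}\|\dkik\|^2$), or, as the paper does, use the optimality condition $-U_{i_k}^{T}\nabla h(\xkip)-\HF\dkik\in\partial\phi_{i_k}\left(\xkikn\right)$ together with convexity of $\phi_{i_k}$ to obtain $\phi_{i_k}\left(\xkikn\right)\le\phi_{i_k}\left(\xkik\right)-\langle U_{i_k}^{T}\nabla h(\xkip)+\HF\dkik,\dkik\rangle$. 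With that replacement your argument closes (indeed with slack, since your Taylor constant is smaller than the paper's), yielding $F(\xki)\le F(\xkip)-\frac{\eta_{i_k}}{2}\|\dkik\|^2$ and hence the lemma after summing over $i_k=1:N$.
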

	
	\begin{proof}
	See appendix for a proof.
	\end{proof}
	
	\medskip 
	
	\noindent  Next, we introduce some additional assumptions:
	
	\begin{assumption} 
	\label{ass2}
	Consider the following assumptions: \\
	A.5:  Sequence $(x_{k})_{k\geq 0}$ generated by  CPGD is  bounded and  there exists $L_{\max}$ such that $\Lik \leq L_{\max}$ for all $k \geq 0$.  \\
	A.6:  Assume that the full gradient  $\nabla f$ is Lipschitz continuous on any bounded subset of $\mathbb{R}^{n}$.
	\end{assumption}
	
	
	\noindent Let us define the following constant: 
	\vspace*{-0.2cm}
	\begin{equation}
		H_{F,\max} = \max_{ k \geq 0,i_{k}=1:N} \HF.  \label{eq:Hmax}
	\end{equation}
	Note that if Assumption 2.[A.5] holds, then $H_{F,\max}$ is finite. If  $(x_{k})_{k\geq 0}$ is bounded, from Assumption \ref{ass1}.[A.2]  there exists $\Hpsimax>0$ such that  $\Hpsimax = \max_{i=1:N, x \in \overline{\text{conv}}\{(x_{k})_{k\geq 0}\}} \|U^{T}_{i}\nabla^2 \psi (x)\| < \infty$. Moreover, in the next lemma we consider $L>0$ to be the  Lipschitz constant for the full gradient $\nabla f$ over $\text{conv}\{(x_{k})_{k\geq 0}\}$. 
Then, we have the following result. 
	
	
	\begin{lemma}
		\label{lemma:2}	
		Let $(x_{k})_{k\geq 0}$ be generated by  algorithm CPGD and $C= 4NL^2 + 4N\Hpsimax^2+ 2 H_{F,\max}$. If  Assumptions 1 and 2 hold, then:
		\vspace*{-0.2cm}
		\begin{equation}
			\left[S_{F}(x_{k+1})\right]^{2} \leq  C  \|x_{k+1} - x_{k}\|^2. 
		\end{equation}
	\end{lemma}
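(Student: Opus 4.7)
The plan is to exhibit an explicit element $v \in \partial F(x_{k+1})$ built from the subproblem optimality conditions, and to bound $\|v\|$ by $\sqrt{C}\,\|x_{k+1}-x_k\|$; since $S_F(x_{k+1}) \leq \|v\|$ by definition \eqref{eq:subg}, this yields the claim. First I would invoke the first-order optimality condition for the strongly convex subproblem \eqref{eq:subproblem2} executed at each inner iteration: for every $i=1{:}N$, taking $i_k = i$, there exists $g_i \in \partial\phi_i(x_{k+1}^{(i)})$ satisfying
\[
g_i \;=\; -\,U_i^T \nabla h(x_{k,i-1}) \,-\, H_{F_i^k}\, d_k^{(i)}.
\]
Since $h = f+\psi$ is differentiable and $\phi$ is separable, the vector $v := \nabla h(x_{k+1}) + \sum_{i=1}^N U_i g_i$ belongs to $\partial F(x_{k+1})$.

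Next I would estimate $\|v\|^2 = \sum_{i=1}^N \|U_i^T v\|^2$ block-wise. Substituting the expression for $g_i$ gives $U_i^T v = U_i^T\bigl(\nabla h(x_{k+1}) - \nabla h(x_{k,i-1})\bigr) - H_{F_i^k}\, d_k^{(i)}$, and then Young's inequality $\|a+b\|^2 \leq 2\|a\|^2+2\|b\|^2$ separates the two pieces. To control the gradient difference, split $\nabla h = \nabla f + \nabla\psi$: Assumption \ref{ass2}.[A.6] combined with Assumption \ref{ass2}.[A.5] gives $\|\nabla f(x_{k+1}) - \nabla f(x_{k,i-1})\| \leq L\,\|x_{k+1}-x_{k,i-1}\|$, while Assumption \ref{ass1}.[A.2] together with boundedness of $(x_k)_{k\geq 0}$ implies $\|U_i^T \nabla^2\psi(y)\| \leq \Hpsimax$ on the compact convex hull of the iterates, so the mean-value theorem yields $\|U_i^T(\nabla\psi(x_{k+1}) - \nabla\psi(x_{k,i-1}))\| \leq \Hpsimax \,\|x_{k+1}-x_{k,i-1}\|$.

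Finally, I would sum over $i$. The key telescoping observation is that $x_{k+1} - x_{k,i-1} = \sum_{j=i}^N U_j d_k^{(j)}$, and since the columns of $U$ are orthonormal this gives $\|x_{k+1}-x_{k,i-1}\|^2 \leq \sum_{j=1}^N \|d_k^{(j)}\|^2 = \|x_{k+1}-x_k\|^2$. Assembling the $N$ block-wise bounds, using $H_{F_i^k} \leq H_{F,\max}$ and $\sum_i \|d_k^{(i)}\|^2 = \|x_{k+1}-x_k\|^2$, produces the constant $C$ announced in the lemma (each block contributes $4(L^2+\Hpsimax^2)$ to the gradient-difference term, summing to $4N$, plus a $2 H_{F,\max}^2$ term from the $H_{F_i^k} d_k^{(i)}$ piece). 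The inequality $[S_F(x_{k+1})]^2 \leq \|v\|^2$ then closes the argument.

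The main obstacle I expect is the mismatch in evaluation points: the subproblem optimality condition naturally provides $\nabla h$ at the intermediate iterate $x_{k,i-1}$, whereas the subdifferential of $F$ at $x_{k+1}$ demands $\nabla h(x_{k+1})$. Absorbing this mismatch is exactly what forces the introduction of the global Lipschitz constant $L$ from Assumption \ref{ass2}.[A.6] and the uniform Hessian bound $\Hpsimax$ distilled from Assumption \ref{ass1}.[A.2], and it also requires the sharp estimate $\|x_{k+1}-x_{k,i-1}\| \leq \|x_{k+1}-x_k\|$; a looser triangle-inequality approach summing $\|x_{k+1}-x_k\|$ across all remaining blocks would introduce an extra $N$-factor that would inflate the constant beyond $C$.
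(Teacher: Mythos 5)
Your proposal follows essentially the same route as the paper's proof: both exhibit the explicit subgradient $v=\nabla h(x_{k+1})+\sum_i U_i g_i$ from the optimality conditions of \eqref{eq:subproblem2}, apply $\|a+b\|^2\le 2\|a\|^2+2\|b\|^2$ twice to split off the $H_{F_{i}^{k}}d_k^{(i)}$ term and to separate $\nabla f$ from $\nabla\psi$, and then invoke the restricted Lipschitz constant $L$ and the Hessian bound $\Hpsimax$ together with $\|x_{k+1}-x_{k,i-1}\|\le\|x_{k+1}-x_k\|$. The only differences are cosmetic and in your favor: your telescoping argument makes explicit the estimate $\|x_{k+1}-x_{k,i-1}\|\le\|x_{k+1}-x_k\|$ that the paper merely asserts, and your bookkeeping correctly produces $2H_{F,\max}^{2}$ from $\|H_{F_{i}^{k}}d_k^{(i)}\|^2$ where the paper's constant $C$ (and its displayed intermediate bound) drops the square --- an apparent typo in the paper rather than a gap in your argument.
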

	
	\begin{proof} 
		See appendix for a proof. 
	\end{proof}
	
	\medskip 
	
	\noindent Let us denote the set of limit points of the sequence $(x_{k})_{k\geq 0}$ by $\Omega(x_{0})$.  Next lemma derives some  properties for   $\Omega(x_{0})$. 
	
	\medskip 
	
	\begin{lemma}
		\label{lemma:3}
		Let $(x_{k})_{k\geq 0}$ be generated by algorithm CPGD. If  Assumptions 1 and 2 hold, then $\Omega(x_{0})$ is  compact set,  $F$ is constant on $\Omega(x_{0})$ and $0 \in \partial F(\Omega(x_{0}))$.
	\end{lemma}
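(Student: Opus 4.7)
The plan is a standard limit-point analysis for descent methods, combining the sufficient-decrease inequality of Lemma~\ref{lemma:1}, the subgradient bound of Lemma~\ref{lemma:2}, and the closedness properties of the limiting subdifferential. I would treat the three claims in the order they are stated.

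For compactness of $\Omega(x_0)$, I would write $\Omega(x_0) = \bigcap_{k \geq 0} \overline{\{x_{j} : j \geq k\}}$, which is closed as an intersection of closed sets. Assumption~\ref{ass2}.[A.5] gives boundedness of $(x_k)_{k\geq 0}$, hence $\Omega(x_0)$ is bounded and therefore compact. For the constancy of $F$ on $\Omega(x_0)$, I would first combine Lemma~\ref{lemma:1} with Assumption~\ref{ass1}.[A.4] to conclude that $(F(x_k))_{k\geq 0}$ is nonincreasing and bounded below, so it converges to some $F_\infty$. Telescoping the descent inequality gives $\sum_{k\geq 0} \|x_{k+1}-x_k\|^2 < \infty$, and in particular $\|x_{k+1}-x_k\| \to 0$. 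For any $x^* \in \Omega(x_0)$ and any subsequence $x_{k_j} \to x^*$, the closedness of $Q$ yields $x^* \in Q$ and $\phi(x^*)=0$; continuity of $f+\psi$ then gives $F(x^*) = \lim_j F(x_{k_j}) = F_\infty$, independent of $x^*$.

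For the subdifferential claim, applying Lemma~\ref{lemma:2} together with $\|x_{k+1}-x_k\|\to 0$ shows $S_F(x_{k+1}) \to 0$, so for any $x^* \in \Omega(x_0)$ with $x_{k_j}\to x^*$ we can pick $g_{k_j} \in \partial F(x_{k_j})$ with $g_{k_j}\to 0$. Since $F = (f+\psi) + \phi$ with $f+\psi$ of class $C^2$ and $\phi$ lower semicontinuous, the limiting subdifferential has a closed graph along sequences with $x_{k_j}\to x^*$ and $F(x_{k_j})\to F(x^*)$; both convergences have already been established, so passing to the limit yields $0 \in \partial F(x^*)$. The main obstacle, and the only step that is not quite a routine calculation, is the invocation of this closed-graph property: one must verify that $F(x_{k_j})\to F(x^*)$ (not merely $x_{k_j}\to x^*$) holds on the chosen subsequence so that the limiting subdifferential is actually outer semicontinuous there. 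Fortunately, the constancy claim proved in the previous paragraph provides exactly the functional convergence needed to close the argument.
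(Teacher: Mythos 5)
Your proof is correct and follows essentially the same route as the paper: compactness via boundedness plus the closure-of-tails representation, summability of $\|x_{k+1}-x_k\|^2$ from Lemma~\ref{lemma:1}, vanishing subgradients from Lemma~\ref{lemma:2}, and closedness of the limiting subdifferential to conclude $0\in\partial F(x_*)$. The one point where you diverge is the functional convergence $F(x_{k_j})\to F(x^*)$: you exploit the indicator structure of $\phi$ directly (all iterates lie in the closed set $Q$, so $\phi$ vanishes along the sequence and at the limit, and continuity of $f+\psi$ finishes it), whereas the paper argues more generally via lower semicontinuity of $\phi$ combined with convexity and boundedness of the subgradients $\phi_{x_{\bar k}}$ to squeeze $\phi(x_{\bar k})\to\phi(x_*)$. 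Your version is shorter and fully justified under Assumption~\ref{ass1}.[A.3]; the paper's version would survive if $\phi$ were replaced by an arbitrary convex lower semicontinuous function.
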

	
	\begin{proof}
	See appendix for a proof. 
	\end{proof}
	
	\medskip 
	
	\begin{lemma}
		\label{lemma:rec}
		Let $\{\Delta_{k}\}_{k\geq0}$ be a  sequence of positive numbers satisfying the following recurrence:  
		\vspace*{-0.2cm}
		\begin{equation}
			\Delta_{k} - \Delta_{k+1} \geq c \Delta_{k+1}^{\alpha+1} \quad \forall k \geq 0. 
			\vspace*{-0.2cm}
		\end{equation}
		\noindent Then,  we have:  \\
		(i) For $c=1$ and  $\alpha \in (0,1)$ the sequence  $\Delta_k$ converges to $0$ with sublinear rate: 
		\begin{equation}
			\Delta_{k} \leq \dfrac{ \Delta_{0}}{\left(  1 + \dfrac{\alpha k}{1 + \alpha} \ln(1 + \Delta_{0}^{\alpha})\right)^\frac{1}{\alpha}}. \label{eq:61}
		\end{equation} 	 
		(ii)  For  $c>0$ and $\alpha = 0$ the sequence  $\Delta_k$ converges to $0$ with linear rate: 
		\vspace*{-0.2cm}
		\begin{equation}
			\Delta_{k} \leq \left(\dfrac{1}{1+c}\right)^{k}\Delta_{0}.  
			\label{eq:62}
		\end{equation}	 
		(iii) For $c>0$ and $\alpha < 0$ the sequence  $\Delta_k$ converges to $0$ with superlinear rate: 
		\begin{equation}
			\Delta_{k+1} \leq \left(\dfrac{1}{1 + c \Delta_{k+1}^{\alpha}}\right) \Delta_{k}.  
			\label{eq:63}
		\end{equation} 	
	\end{lemma}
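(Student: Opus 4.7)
My plan is to treat the three parts separately, since each one corresponds to a qualitatively different regime governed by the sign of $\alpha$, and each requires its own manipulation of the underlying recurrence $\Delta_k - \Delta_{k+1} \geq c\Delta_{k+1}^{\alpha+1}$.

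For part (ii), with $\alpha=0$, the recurrence reads $\Delta_k - \Delta_{k+1} \geq c\Delta_{k+1}$, which I rearrange to $\Delta_{k+1} \leq \Delta_k/(1+c)$; a one-line induction then gives \eqref{eq:62}. For part (iii), with $\alpha<0$, I factor the recurrence as $\Delta_k \geq \Delta_{k+1}(1+c\Delta_{k+1}^\alpha)$ and simply divide, yielding \eqref{eq:63} directly. Since $\Delta_{k+1}^\alpha \to \infty$ as $\Delta_{k+1} \to 0^+$ when $\alpha<0$, the contraction factor $(1+c\Delta_{k+1}^\alpha)^{-1}$ tends to $0$, which is exactly the superlinear behaviour claimed.

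The substantive part of the lemma is (i). Here my strategy is to study $U_k := \Delta_k^{-\alpha}$, so that $U_k$ is increasing and the sublinear decay of $\Delta_k$ becomes equivalent to an at-least-linear growth of $U_k$. First I rewrite the recurrence (with $c=1$) as $\Delta_k \geq \Delta_{k+1}(1+\Delta_{k+1}^\alpha)$. Since $\alpha \in (0,1)$, raising both sides to the power $-\alpha$ reverses the inequality and produces $\Delta_{k+1}^{-\alpha} \geq \Delta_k^{-\alpha}(1+\Delta_{k+1}^\alpha)^\alpha$. Applying the elementary inequality $(1+y)^\alpha = e^{\alpha\ln(1+y)} \geq 1 + \alpha\ln(1+y)$ gives the per-step bound $U_{k+1} - U_k \geq \alpha\, U_k \ln(1+\Delta_{k+1}^\alpha)$. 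To convert this into a constant additive growth of $U_k$, I would combine the monotonicity $U_k \geq U_0 = \Delta_0^{-\alpha}$ with the a~priori bound $\Delta_{k+1} \leq \Delta_0$ and the fact that $y \mapsto \ln(1+y)/y$ is decreasing on $(0,\infty)$, then telescope from $0$ to $k-1$ and invert.

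The main obstacle is recovering the precise constant $\frac{\alpha}{1+\alpha}\ln(1+\Delta_0^\alpha)$ appearing in \eqref{eq:61}. The factor $(1+\alpha)^{-1}$ does not fall out of a naive telescoping; it seems to require a sharper comparison between $(1+y)^\alpha$ and its linearisation on the interval $y \in [0,\Delta_0^\alpha]$ (for instance, using that one also has $\Delta_{k+1}/\Delta_k \geq 1/(1+\Delta_0^\alpha)$ from the recurrence, which allows one to replace $\ln(1+\Delta_{k+1}^\alpha)$ by a uniform multiple of $\ln(1+\Delta_0^\alpha)$). Once a per-step inequality of the form $U_{k+1} - U_k \geq \tfrac{\alpha}{1+\alpha}\,\Delta_0^{-\alpha}\ln(1+\Delta_0^\alpha)$ is established, summing over $k$ and raising to the power $-1/\alpha$ produces \eqref{eq:61}.
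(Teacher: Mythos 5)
Your treatments of parts (ii) and (iii) are correct and coincide with the paper's: both are one-line rearrangements of the recurrence, and your remark that $(1+c\Delta_{k+1}^{\alpha})^{-1}\to 0$ when $\alpha<0$ correctly explains the superlinearity. For part (i), however, your argument has a genuine, self-acknowledged gap, and the completion you sketch cannot deliver the stated constant. The paper itself does not prove \eqref{eq:61}; it invokes Lemma 11 of Nesterov's \emph{Inexact basic tensor methods} \cite{Nes:19}, so either cite that result (as the authors do) or supply a full argument --- the half-proof you give is not a substitute.

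Concretely, your per-step bound $U_{k+1}-U_k\ \geq\ \alpha\,U_k\ln\bigl(1+\Delta_{k+1}^{\alpha}\bigr)$ with $U_k=\Delta_k^{-\alpha}$ is correct, but the proposed way of turning it into a uniform increment fails quantitatively. Using $U_k\geq U_0$, the monotonicity of $y\mapsto\ln(1+y)/y$, and $\Delta_{k+1}/\Delta_k\geq (1+\Delta_0^{\alpha})^{-1}$, the best you obtain is
\begin{equation*}
U_{k+1}-U_k\ \geq\ \alpha\,(1+\Delta_0^{\alpha})^{-\alpha}\,\Delta_0^{-\alpha}\ln\bigl(1+\Delta_0^{\alpha}\bigr),
\end{equation*}
and $(1+\Delta_0^{\alpha})^{-\alpha}\geq \tfrac{1}{1+\alpha}$ holds only when $\Delta_0^{\alpha}\leq(1+\alpha)^{1/\alpha}-1$; for large $\Delta_0$ this factor is strictly smaller than $\tfrac{1}{1+\alpha}$, so you would prove a weaker bound than \eqref{eq:61}, not \eqref{eq:61} itself. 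Moreover, the constant $\tfrac{1}{1+\alpha}$ is essentially tight (take equality in the recurrence with $\Delta_0$ large, where $(\Delta_0/\Delta_1)^{\alpha}\approx\Delta_0^{\alpha^2/(1+\alpha)}$ and the first increment of $\Delta_0^{\alpha}U_k$ is asymptotically $\tfrac{1}{1+\alpha}\ln(1+\Delta_0^{\alpha})$), so no coarse uniform replacement of $\ln(1+\Delta_{k+1}^{\alpha})$ by a multiple of $\ln(1+\Delta_0^{\alpha})$ will close the gap; a genuinely sharper per-step analysis (or the citation) is needed.
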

	
	\begin{proof}
	See appendix for a proof.
	\end{proof}
	
	\noindent In the next theorem we derive convergence rates for CPGD algorithm  when the objective function $F$ satisfies the KL condition. From simplicity of exposition, let us define:
	$$D = \dfrac{\eta_{\min}}{2\left( 4NL^2 + 4N\Hpsimax^2+ 2 H_{F,\max}\right) }.$$

	\begin{theorem}
		\label{theo:KL1}
		Let $(x_{k})_{k\geq 0}$ be generated by  algorithm CPGD. Let  Assumptions 1 and  2 hold and additionally, assume that  $F$ satisfy the KL property \eqref{eq:kl} on $\Omega(x_{0})$. Then:
		(i) If $q\in(1,2)$, we have the following sublinear rate:
		\begin{small}
			\begin{equation*}
				F(x_{k}) - F_{*} \leq \dfrac{F(x_{0}) - F_{*}}{\left( 1 \!+\! \frac{2-q}{2}k \ln \! \left(\! 1 \!+\! D\sigma_q^{-\frac{2}{q}} \! \left( F(x_{0}) - F_{*}\right)^{\frac{2-q}{q}}\right) \! \right)^\frac{q}{2-q}}
			\end{equation*}
		\end{small}
		(ii) If $q=2$, we have the following linear rate:
		\begin{equation*}
			F(x_{k}) - F_{*} \leq \left( \dfrac{\sigma_2}{\sigma_2 + D}\right)^{k} \left( F(x_{0}) - F_{*}\right). 
		\end{equation*}
		(ii) If $q>2$, we have the following superlinear rate:
		\begin{equation*}
			F(x_{k}) - F_{*} \leq  \dfrac{F(x_{k-1}) - F_{*}}{1 + D \sigma_{q}^{-\frac{2}{q}}\left(F(x_{k}) - F_{*}\right)^{\frac{2-q}{q}}}. 
		\end{equation*}
	\end{theorem}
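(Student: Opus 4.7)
The plan is to derive a one-step recursion on the residual $\Delta_k := F(x_k) - F_*$ that matches the form required by Lemma~\ref{lemma:rec}, and then invoke the three parts of that lemma to obtain the three regimes.

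First, I would chain Lemmas~\ref{lemma:1} and \ref{lemma:2} to bound the function decrease from below in terms of the subgradient at the next iterate. Specifically, Lemma~\ref{lemma:1} gives $F(x_k) - F(x_{k+1}) \geq (\eta_{\min}/2)\,\|x_{k+1}-x_k\|^2$, while Lemma~\ref{lemma:2} gives $\|x_{k+1}-x_k\|^2 \geq (1/C)[S_F(x_{k+1})]^2$ with $C = 4NL^2 + 4N\Hpsimax^2 + 2H_{F,\max}$. Combining these yields
\begin{equation*}
F(x_k) - F(x_{k+1}) \geq D\,[S_F(x_{k+1})]^2,
\end{equation*}
with $D = \eta_{\min}/(2C)$ as defined above the theorem. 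Next, by Lemma~\ref{lemma:3}, $\Omega(x_0)$ is a compact set on which $F \equiv F_*$, and the iterates approach it, so for $k$ sufficiently large the KL inequality \eqref{eq:kl} applies at $x_{k+1}$. Raising both sides of \eqref{eq:kl} to the power $2/q$ gives $[S_F(x_{k+1})]^2 \geq \sigma_q^{-2/q}\,\Delta_{k+1}^{2/q}$, and substituting yields
\begin{equation*}
\Delta_k - \Delta_{k+1} \geq D\,\sigma_q^{-2/q}\,\Delta_{k+1}^{2/q}.
\end{equation*}
Setting $\alpha = (2-q)/q$ and $c = D\,\sigma_q^{-2/q}$, this is exactly the recurrence $\Delta_k - \Delta_{k+1} \geq c\,\Delta_{k+1}^{\alpha+1}$ treated in Lemma~\ref{lemma:rec}.

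Then I would dispatch into three cases according to $q$. For $q=2$ we have $\alpha=0$ and $c = D/\sigma_2$, so Lemma~\ref{lemma:rec}(ii) yields the linear rate with contraction factor $1/(1+c) = \sigma_2/(\sigma_2+D)$, matching~(ii). For $q>2$ we have $\alpha<0$ and $c>0$, and Lemma~\ref{lemma:rec}(iii) applied directly produces the superlinear bound in~(iii). The case $q\in(1,2)$ is the only nonroutine one: here $\alpha\in(0,1)$ but $c$ is not necessarily equal to one, whereas Lemma~\ref{lemma:rec}(i) is stated with $c=1$. I would handle this by rescaling: set $\tilde\Delta_k = c^{1/\alpha}\Delta_k$, verify that $\tilde\Delta_k - \tilde\Delta_{k+1} \geq \tilde\Delta_{k+1}^{\alpha+1}$, apply Lemma~\ref{lemma:rec}(i) to $\tilde\Delta_k$, and then invert the scaling. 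Using $\alpha/(1+\alpha) = (2-q)/2$ and $1/\alpha = q/(2-q)$ reproduces exactly the sublinear bound stated in~(i).

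The main obstacle is largely bookkeeping: one must verify that the KL inequality does apply at every $x_{k+1}$ used in the derivation. This requires showing that eventually the iterates lie in the neighborhood $\{x:\,\mathrm{dist}(x,\Omega(x_0))\leq\delta,\;F_*<F(x)<F_*+\epsilon\}$ appearing in Definition~\ref{def:kl}, which follows from Lemma~\ref{lemma:3} together with the monotone decrease of $F(x_k)$ supplied by Lemma~\ref{lemma:1}; a reindexing then lets us assume the recursion holds from $k=0$, so the constants in the stated bounds use $F(x_0)-F_*$. The degenerate situation where $\Delta_k=0$ for some finite $k$ is immediate, since then $x_k$ is already a critical point and the descent property freezes the iterates.
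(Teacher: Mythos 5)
Your proposal is correct and follows exactly the route the paper intends: its proof of Theorem~\ref{theo:KL1} is just the one-line remark that it follows from Lemmas~\ref{lemma:1}, \ref{lemma:2} and \eqref{eq:kl}, which is precisely the chain $\Delta_k-\Delta_{k+1}\ge D\,[S_F(x_{k+1})]^2\ge D\sigma_q^{-2/q}\Delta_{k+1}^{2/q}$ followed by Lemma~\ref{lemma:rec} that you carry out. You in fact supply details the paper omits, notably the rescaling $\tilde\Delta_k=c^{1/\alpha}\Delta_k$ needed to apply Lemma~\ref{lemma:rec}(i) (stated only for $c=1$) and the verification that the KL inequality is eventually applicable along the trajectory; both check out against the stated constants.
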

	
	\begin{proof}
Proof 	follows from  Lemmas \ref{lemma:1}, \ref{lemma:2} and \eqref{eq:kl}. 
	\end{proof}


	\section{Orthogonal Nonnegative Matrix Factorization}
	\noindent We consider  a penalized formulation of the orthogonal nonnegative matrix factorization problem, used e.g., in dimensionality reduction of big data \cite{AhoHie:21,ChiLu:19}:
	\begin{equation} 
		\min_{W \in \mathbb{R}^{m\times r}_{+}, V \in \mathbb{R}^{r \times n}_{+} } \dfrac{1}{2} \| X - WV\|^{2}_{F} + \dfrac{ \lambda}{2} \|I-VV^{T}\|^2_{F}.  \label{eq:MF}
	\end{equation}
	
	\noindent Let us define the following functions: 
	\begin{align*}
		& f(W,V) =  \dfrac{1}{2} \| X - WV\|^{2}_{F}, \quad \psi(W,V) = \dfrac{ \lambda}{2} \|I-VV^{T}\|^2_{F}, \\
		& \phi(W,V) = \phi_{1}(W) + \phi_{2}(V),
	\end{align*} 
	where
	\begin{equation*}
		\phi_{1}(W) =  \left\lbrace\begin{array}{ll}  0
			\text{ if } W \in \mathbb{R}^{m\times r}_{+}, \\
			+ \infty \text{ otherwise} 
		\end{array}\right.
		\phi_{2}(V) =  \left\lbrace\begin{array}{ll}  0
			\text{ if } V \in \mathbb{R}^{r\times n}_{+}, \\
			+ \infty \text{ otherwise.} 
		\end{array}\right. 
	\end{equation*}
	
	\noindent For optimization problem \eqref{eq:MF} one can notice that  the functions $f(W,V)$ and $\psi(W,V)$ are polynomial and $\phi_{1}(W)$ and $\phi_{2}(V)$ are indicator functions of semialgebraic (polyhedral) sets,  thus 
	they are semialgebraic functions. Therefore, it follows from Theorem 3.1 in \cite{BolDan:07} that the objective function 
	$F(W, V) = f (W, V) +  \psi(W,V) + \phi(W,V)$ satisfies the  KL condition  for some  exponent $q>1$ (see \eqref{eq:kl}).
	
	\noindent  Note that we have the following expressions for the  gradient and the hessian of $f$: 
	\begin{align*}
		&\nabla_{W} f(W,V) = WVV^{T} \!-\! XV^{T}, \\    
		& \nabla_{V} f(W,V) = W^{T}WV - W^{T}X, \\
		& \nabla^{2}_{WW} f(W,V)Z = ZVV^{T}, \;\; \nabla^{2}_{VV} f(W,V)Z = W^{T}WZ.
	\end{align*}
	
	\noindent Thus $\nabla f$ is Lipschitz continuous w.r.t. $W$, with constant Lipschitz $L_{1}(V) = \|VV^{T}\|_{F}$.  Similarly,  $\nabla f$ is Lipschitz continuous w.r.t. $V$, with constant Lipschitz $L_{2}(W) = \|W^{T}W\|_{F}$.  This implies that $ \nabla f$ satisfies Assumption 1.[A.1].  Moreover, $\nabla_{V} \psi (W,V) = 2 \lambda (VV^{T}V - V)$ and the hessian is:
	\begin{equation*}
		\nabla^2_{VV} \psi (W,V)Z = 2 \lambda (ZV^{T}V + VZ^{T}V + VV^{T}Z - Z). 
	\end{equation*}
	\noindent Thus, we get the following bound on the hessian of $\psi$:
	\begin{align*}
		&\langle Z, \nabla^2_{V} \psi (W,V)Z \rangle \\
		&= \langle Z, 2 \lambda (ZV^{T}V + VZ^{T}V + VV^{T}Z - Z) \rangle \\ &\leq 6\lambda\|Z\|_{F}^{2} \|V\|_{F}^{2}. 
	\end{align*}
	
	\noindent This implies that $ \nabla^2_{V} \psi (\cdot)$ satisfies Assumption 1.[A.2], with $p=2$ and $H_{\psi_{2}} = 6\lambda$. Moreover, note that $H_{\psi_{1}} = 0$. Therefore, we can solve  problem \eqref{eq:MF} using algorithm CPGD,   having   the following updates: 
	\begin{align*}
		&W_{k+1} = \max\left( W_{k} - \dfrac{1}{\HfW(V_{k})}(W_{k}V_{k}V_{k}^{T} - XV_{k}^{T}),0\right), \\
		&\bar{V}_{k} = V_{k} - \dfrac{1}{\HF}\left( W_{k+1}^{T}W_{k+1}V_{k} - W_{k+1}^{T}X \right)  \\
		& \quad  - \dfrac{1}{\HF}\left(  2 \lambda (V_{k}V^{T}_{k}V_{k} - V_{k})\right), \;\; V_{k+1} = \max\left(\bar{V}_{k},0\right). 
	\end{align*}
	
	\noindent In the previous updates,  we choose $\HfW(V_{k}) > \dfrac{L_{1}(V_{k})}{2}$, $\HfV(W_{k+1}) > \dfrac{L_{2}(W_{k+1})}{2}$,   $\HF = 12 \lambda \|V_{k}\|_{F}^2 + 12\lambda \alphaki^2 + \HfV(W_{k+1})$ and $\alphaki$ as  the solution of  equation:
	\begin{align*}
		&12 \lambda \alpha^{3} + \left( 12\lambda  \|V_{k}\|_{F}^2  + \HfV(W_{k+1})\right) \alpha \\
		&- \|\nabla_{V} f(W_{k+1},V_{k}) +\nabla_{V} \psi(W_{k+1},V_{k}) \|_{F} = 0. 
	\end{align*}
	Note that one can find explicitly the positive root of this third order equation. Moreover, our algorithm is very simple to implement as it requires only basic matrix operations.   
	\begin{figure}[h]
		\includegraphics[height=0.2\textheight, width= 0.18\textheight]{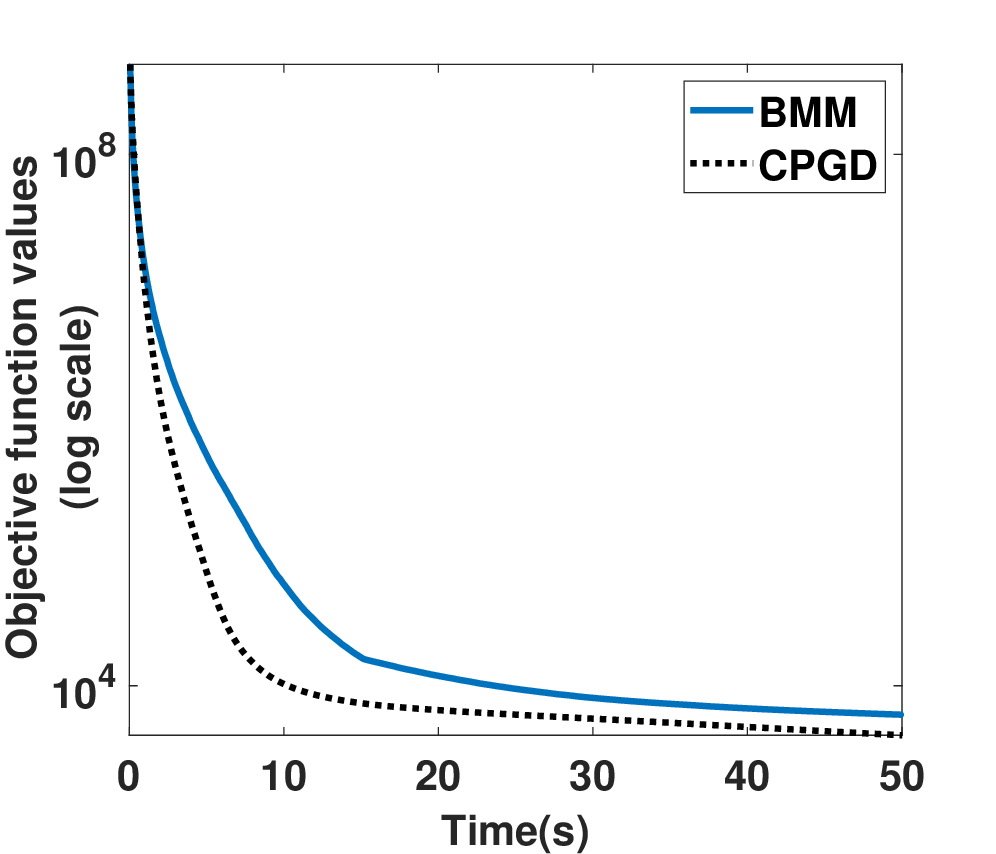} 
		\hspace{-0.4cm}
		\includegraphics[height=0.2\textheight, width= 0.18\textheight]{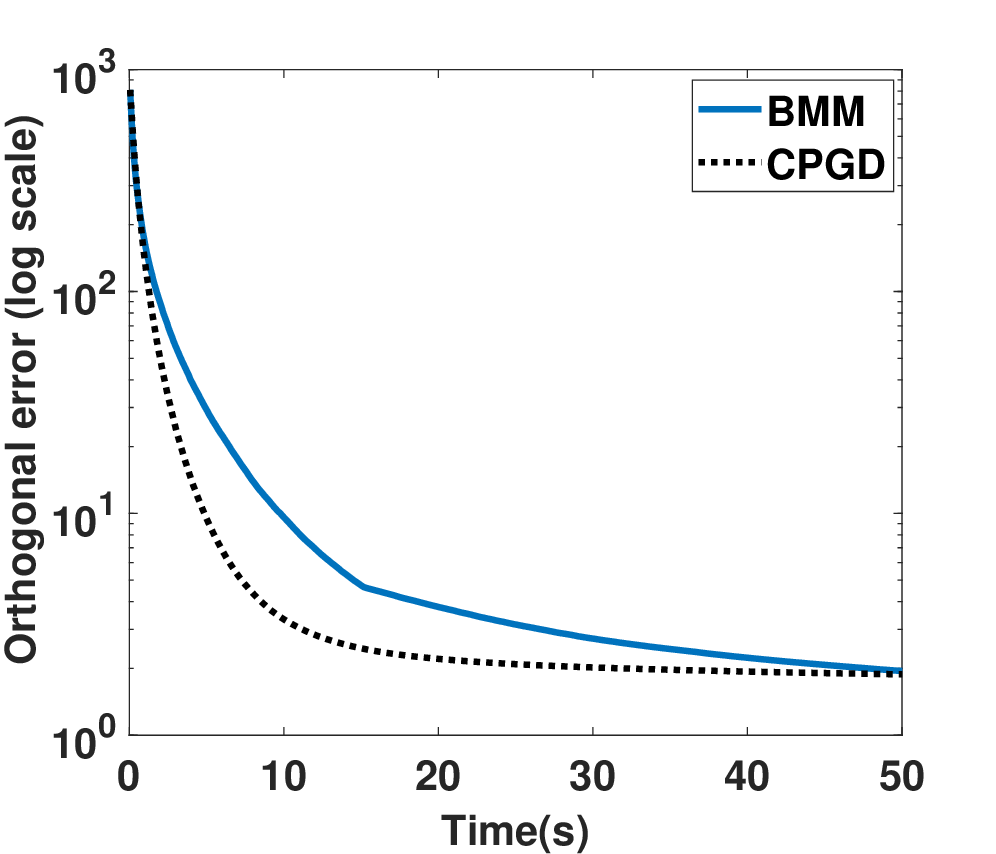}
		\caption{Salinas dataset for $r = 15$: evolution of objective function values (top figure) and orthogonal error (bottom figure) with respect to time of BMM and CPGD}
		\label{fig:1}
	\end{figure}

	\noindent In our experiments, we take $\HfW(V_{k}) =  0.51 \cdot L_{1}(V_{k})$ and $\HfV(W_{k+1}) = 0.51 \cdot L_{2}(W_{k+1})$. We compare our CPGD algorithm with the algorithm BMM proposed in \cite{HiePha21}. For problem \eqref{eq:MF}, BMM is a Bregman gradient descent method having computational cost per iteration comparable to CPGD. For numerical tests, we consider Salinas data set from \cite{DataSet}, which has the dimensions $111104 \times 204$. Each row of matrix $X$ is a vectorized image at a given band of the data set and we aim to reduce the number of bands using formulation \eqref{eq:MF}, i.e. to reduce $X \in \mathbb{R}^{111104 \times 204}$ to $W \in \mathbb{R}^{111104 \times r}$, with $r \ll 204$. We have $16$ classes for this image.  
	The matrices $W_0$ and $V_{0}$ are initialized random, but positive. Moreover, we take $\lambda = 1000$ and the dimension $r =15$. We run the two algorithms for $50$s. The results are displayed in Figures \ref{fig:1}, where we plot  the evolution of  function values (top) and the orthogonality error $O_{\text{error}} = \|I - V_{k}V_{k}^{T}\|_{F}$ (bottom) along time. From the plots, we observe that algorithm CPGD is better than BMM in terms of both,  function values  and  orthogonality error.

	\medskip
	\noindent We further test the quality of the reduced data $W$ for different ranks $r$ on a classification task. We choose the nonlinear Support Vector Machine (SVM) \cite{BosGuy:92} with the Radial Basis Function(RBF) for the kernel. We set the regularization parameter of the classifier to 10 and the kernel coefficient to $1/(r \cdot \text{Var}(W))$, where $\text{Var}(W)$ is the variation of $W$. Further, we divide randomly our data set in 80\% samples from each class for the training and the remaining 20\% for testing. Additionally, to reduce the influence of
	samples random selection, the classifier runs 10 times 	and we display the average results in Table \ref{tb:class}. The performance is quantified via the overall accuracy (OA) expressed in percentage, the Cohen's kappa coefficient ($\kappa$) \cite{GraBag:20} and the training time. Observe that for the reduced matrix data $W$ given by CPGD, we obtain better performances for the classifier than for the BMM   reduced matrix data,  in all the criteria (OA, $\kappa$ and time). Additionally,  one can observe that for $r =80$, we obtain for the classifier based on the  CPGD  reduced data matrix  even better overall accuracy than on the original data.
	 	
	\begin{table}[h]
		\caption{Classification results for Salinas data set obtained by applying  RBF SVM for  different CPGD vrs. BMM  (reduced) data matrices of rank  $r$.}
		\label{tb:class}
		\begin{tabular}{|c|c|c|c|c|c|c|}
			\hline
			{Alg.}&{r}& {5} & {15} & {50}& {80}& {204} \\
			\hline
			\parbox[t]{2mm}{\multirow{3}{*}{\rotatebox{90}{CPGD}}} & {OA(\%)} &{90.52}&{91.17}&{91.63}&{92.26}&{92.05}\\	
			\cline{2-7}
			{}&	{$\kappa$} &{0.8942}&{0.9014}&{0.9064}&{0.9136}&{0.9112}\\
			\cline{2-7}
			{}	&{time(s)}& {7.02} & { 8.19}& {12.97}& {22.04}& {31.3}\\
			\hline\hline
			\parbox[t]{2mm}{\multirow{3}{*}{\rotatebox{90}{BMM}}}&{OA(\%)} &{90.43} & {91.16}& {91.42}& {91.83}& {92.05}\\	
			\cline{2-7}
			{}&{$\kappa$} &{0.8931} & {0.9013}& {0.9041}& {0.9088}& {0.9112}\\
			\cline{2-7}
			{}&{time(s)}& {7.37} & {13.5}& {15.70}& {26.66}& {31.3}\\
			\hline
		\end{tabular}
	\end{table}

	\section{CONCLUSIONS}
	\noindent In this paper we have considered composite problems having  the  objective function formed as a sum of three  terms, first has block coordinate-wise Lipschitz continuous gradient, second  is twice differentiable but nonseparable and third is the indicator function of some separable closed convex set. We have proposed  a new cyclic coordinate projected gradient descent method for solving this problem. Moreover, we have designed a new adaptive stepsize strategy. Further, we derived  convergence bounds in the  nonconvex settings.  Numerical results confirmed the  efficiency of our  algorithm on the orthogonal nonnegative matrix factorization problem.


	\section*{ACKNOWLEDGMENT}
	\noindent The research leading to these results has received funding from: ITN-ETN project TraDE-OPT funded by the European Union’s Horizon 2020 Research and Innovation Programme under the Marie Skłodowska-Curie grant agreement No.  861137; NO Grants 2014-2021, under project ELO-Hyp,  no. 24/2020;  UEFISCDI PN-III-P4-PCE-2021-0720, under project L2O-MOC, nr.  70/2022.


\appendix
\textit{Proof of Lemma \ref{lemma:1}:} 	Since $\psi$ is differentiable, from mean value theorem, there exists $y_k \in [\xkip,\xki]$ such that
$\psi(\xki) - \psi(\xkip) = \langle \nabla \psi(y_{k}), \xki - \xkip \rangle = \left\langle \nabla \psi(y_{k}), U_{i_{k}}\left( x_{k+1}^{(i_{k})} - x_{k}^{(i_{k})} \right)  \right\rangle$.
Combining the last equality with Assumption 1.[A.1] and Step 4 in Algorithm 1, we get:
\vspace*{-0.1cm}
\begin{align}
	&f(\xki)+\psi(\xki) \leq
	f(\xkip) + \psi(\xkip) \label{eq:1} \\  &+ \frac{\Lik}{2} \|\dkik\|^2  
	 + \left\langle U_{i_{k}}^{T}\nabla \psi(y_{k}) + U_{i_{k}}^{T}\nabla f(\xkip),  \dkik\right\rangle \nonumber 
	\vspace*{-0.1cm}
\end{align}	

\noindent On the other hand, from the optimality condition for the problem \eqref{eq:subproblem2}, we have: 
\vspace*{-0.1cm}
\begin{equation*}
	- U_{i_{k}}^{T} \nabla h(\xkip) - \HF \dkik \in \partial \phi_{i_{k}}\left(x_{k+1}^{(i_{k})}\right). 
	\vspace*{-0.1cm}
\end{equation*}
\noindent Moreover, since $\phi$ is convex function, we get:
\vspace*{-0.1cm}
\begin{equation*}
	\phi(\xki) \leq \phi(\xkip) - \langle U_{i_{k}}^{T} \nabla h(\xkip) + \HF \dkik, \dkik \rangle.
	\vspace*{-0.1cm}
\end{equation*}
\noindent Since $\nabla h(x) = \nabla f(x) + \nabla \psi(x)$, combining the last inequality with \eqref{eq:1}, we obtain:	
\vspace*{-0.2cm}
\begin{align}
	& F(\xki) \leq F(\xkip) + \frac{\Lik}{2} \|\dkik\|^2   \\ 
	& + \langle U_{i_{k}}^{T}\left(  \nabla \psi(y_{k}) - \nabla \psi(\xkip)\right) , \dkik\rangle - \HF \|\dkik\|^2. \nonumber \label{eq:101} 
	\vspace*{-0.2cm}
\end{align}

\noindent From mean value inequlity there exists also  some $\bar{x}_k \in [\xkip,y_{k}]$ such that 
$U^{T}_{i_k}(\nabla \psi(y_{k}) - \nabla \psi(\xkip))  \leq \|U^{T}_{i_k}\nabla^2 \psi (\bar{x}_k) U_{i_{k}}\| \| y^{(i_{k})}_{k} - x_{k}^{(i_{k})}\| $.
Note that $\bar{x}_k =  (1-\mu) \xkip + \mu y_{k} $ for some $\mu \in [0,1]$. From  Assumption 1.[A.2] and last inequality we obtain:
\vspace*{-0.1cm}
\begin{align*}
	&\langle U_{i_{k}}^{T}\left(  \nabla \psi(y_{k}) - \nabla \psi(\xkip)\right) , \dkik\rangle \\ 
	&\leq \left\|U^{T}_{i_k}\nabla^2 \psi (\bar{x}_k) U_{i_k}\right\| \left\|y^{(i_{k})}_{k} - x_{k}^{(i_{k})}\right\| \left\|\dkik\right\| \\
	&\leq \Hp \|\xkip \!+\! \mu \left(y_{k} \!-\! \xkip\right) \|^{p} \left\|y_{k} \!-\! \xkip\right\|   \|\dkik\|.
	\vspace*{-0.1cm}
\end{align*}

\noindent Since $y_k \in [\xkip,\xkip+U_{i_k}\dkik]$, then  $\|y_{k}-\xkip\| \leq \|\dkik\|$. Moreover, since $\|a + b\|^p \leq 2^{p-1} \|a\|^p + 2^{p-1} \|b\|^p$ for any $p \geq 1$, we further get:
\vspace*{-0.1cm}
\begin{align*}
	& \langle U_{i_{k}}^{T}\left(\nabla \psi(y_{k}) - \nabla \psi(x_{k})\right) , d_{k}\rangle \\ 
	&\leq 2^{p-1} \Hp \left( \|\xkip\|^{p} +  \mu^p \|y_{k}-\xkip\|^{p}\right) \|\dkik\|^2 \\
	&\leq 2^{p-1} \Hp \|\xkip\|^{p}\|\dkik\|^2 + 2^{p-1} \Hp \|\dkik\|^{p+2}.
	\vspace*{-0.1cm}
\end{align*}

\noindent From inequality above and the expression of $\HF$, we get:
\vspace*{-0.1cm}
\begin{align*}
	&F(\xki) \\
	& \leq F(\xkip) + 2^{p-1} \Hp \|\dkik\|^{p+2} 
	- \HF \|\dkik\|^2 \\
	&+2^{p-1} \Hp \|\xkip\|^{p}\|\dkik\|^2 + \frac{\Lik}{2} \|\dkik\|^2. 
	\vspace*{-0.1cm}
\end{align*}
\noindent On the other hand, using \eqref{eq:xbar} and \eqref{eq:proj}, we have:
\begin{equation*}
	\left\|\dkik\right\| = \left\|x_{k+1}^{(i_{k})} - x_{k}^{(i_{k})}\right\| \leq  \left\|\xbarik - \xkik  \right\|.
\end{equation*}
\noindent Hence,
\vspace*{-0.1cm}
\begin{align*}
	&F(\xki) \leq F(\xkip) + \left( \frac{\Lik}{2} - \HF\right)  \|\dkik\|^2\\
	&\!+\!\! \left( \! 2^{p-1} \! \Hp  \! \|\xkip\|^{p} \!
	+ \! 2^{p-1} \! \Hp \!\! \left\|\xbarik  \!-\! \xkik  \right\|^p \right) \!\! \|\dkik\!\|^{2}  
	\vspace*{-0.1cm}
\end{align*}

\noindent From \eqref{eq:102}, we have $\HF = 2^{p-1} \Hp\|x_{k}\|^{p} + 2^{p-1} \Hp \left\|\xbarik - \xkik  \right\|^p + \Hf$. Using  \eqref{eq:75}, we obtain:
\vspace*{-0.1cm}
\begin{align*}
	F(\xki) &\leq F(\xkip) - \left( \Hf - \frac{\Lik}{2}\right)  \|\dkik\|^2 \\ 
	&\leq F(\xkip) - \frac{\eta_{\min}}{2} \|x_{k+1}^{(i_{k})} - x_{k}^{(i_{k})}\|^2.
	\vspace*{-0.1cm}
\end{align*}
\noindent Summing this  for $i_{k} = 1:N$, the lemma is proved. \\

\textit{Proof of Lemma \ref{lemma:2}:} Since $h$ is assumed differentiable and $\phi$ is convex, then from basic subdifferential calculus rules  it follows that $\nabla h(x) + \partial \phi(x) = \partial F(x)$ for any $x \in Q$, see  \cite{RocWet:98}. Moreover, from the optimality condition of \eqref{eq:subproblem2}, there exists $\phi_{x_{k+1}^{(i_{k})}} \in \partial \phi_{i_{k}}\left(x_{k+1}^{(i_{k})}\right)$ such that: \vspace*{-0.1cm} 
\begin{equation}
	U_{i_{k}}^{T} \nabla h(\xkip) + \HF \dkik + \phi_{x_{k+1}^{(i_{k})}} = 0. \label{eq:opt}
	\vspace*{-0.1cm}
\end{equation}

\noindent On the other hand,  $F_{x_{k+1}} := \nabla h(x_{k+1}) + \phi_{x_{k+1}}  \in \partial F(x_{k+1})$. Since $\phi$ is block separable function, it follows that $\phi_{x_{k+1}} = \sum_{i_{k}}^{N}U_{i_{k}}\phi_{x_{k+1}^{(i_{k})}}$. Using \eqref{eq:opt}, we get:
\vspace*{-0.1cm}
\begin{align*}
	&\left[S_{F}(x_{k+1})\right]^{2} \leq \|F_{x_{k+1}}\|^{2} = \|\nabla h(x_{k+1}) + \phi_{x_{k+1}} \|^2 \\
	&= \sum_{i_{k}=1}^{N}  \left\|U_{i_{k}}^{T}\nabla h(x_{k+1}) - U_{i_{k}}^{T} \nabla h(\xkip) - \HF \dkik\right\|^2 \\
	&\leq \sum_{i_{k}=1}^{N} 2\|U_{i_{k}}^{T}\nabla h(x_{k+1}) - U_{i_{k}}^{T} \nabla h(\xkip)\|^2 \\
	& + \sum_{i_{k}=1}^{N} 2\HF\|\dkik\|^2,
	\vspace*{-0.1cm}
\end{align*}

\noindent where we used  $\|a + b\|^2 \leq 2 \|a\|^2 + 2 \|b\|^2$ in the last inequality. From $\nabla h(x) = \nabla f(x) + \nabla \psi(x)$ and \eqref{eq:Hmax}, we have:
\vspace*{-0.1cm}
\begin{align}
	\label{eq:107}
	&\left[S_{F}(x_{k+1})\right]^{2} \leq \|F_{x_{k+1}}\|^{2} \nonumber\\ 
	&\leq \sum_{i_{k}=1}^{N} 4\|U_{i_{k}}^{T}\nabla f(x_{k+1}) - U_{i_{k}}^{T} \nabla f(\xkip)\|^2  \\
	& \!+ \! \sum_{i_{k}=1}^{N} \!\! 4\|U_{i_{k}}^{T}\nabla \psi(x_{k+1}) -\! U_{i_{k}}^{T} \nabla \psi(\xkip)\|^2  \!+\! 2 H_{F,\max} \|d_k\|^2. \nonumber 
	\vspace*{-0.1cm}
\end{align}

\noindent Since  $\nabla f$ is  Lipschitz over $\text{conv}\{(x_{k})_{k\geq 0}\}$ (see Assumption \ref{ass2}.[A.6]), then there exists $L>0$ such that
\vspace*{-0.1cm}
\begin{align}
	& \|U_{i_{k}}^{T}\nabla f(x_{k+1}) - U_{i_{k}}^{T} \nabla f(\xkip)\|   \leq L \|x_{k+1} - \xkip\|. 
	\label{eq:108}
	\vspace*{-0.1cm}
\end{align}

\noindent Since $\psi$ is twice differentiable, then from the mean value inequality there exists $\bar{x}_{i_k} \in [\xkip,x_{k+1}]$ such that:
\vspace*{-0.1cm}
\begin{align}
	&\|U^{T}_{i_k}(\nabla \psi(x_{k+1}) - \nabla \psi(\xkip))\| \nonumber\\
	 &\leq \|U^{T}_{i_k}\nabla^2 \psi (\bar{x}_k)\| \|x_{k+1} - \xkip\|. \label{eq:109}
	\vspace*{-0.1cm}
\end{align}

\noindent Since $(x_{k})_{k\geq 0}$ is assumed bounded,  then  $\overline{\text{conv}}\{(x_{k})_{k\geq 0}\}$ is also bounded. Using the fact that $\psi$ is twice continuously  differentiable, we have that there exist $\Hpsimax < \infty$ such that $\|U^{T}_{i_k}\nabla^2 \psi (x) \| \leq \Hpsimax$ for all $x \in \overline{\text{conv}}\{(x_{k})_{k\geq 0}\}$ and  $i_{k} = 1:N$. Note $[\xkip,x_{k+1}] \subseteq [x_{k},x_{k+1}]$, hence $\bar{x}_{i_k} \in \overline{\text{conv}}\{(x_{k})_{k\geq 0}\}$. From \eqref{eq:107}, \eqref{eq:108} and \eqref{eq:109}, we get:
\vspace*{-0.1cm}
\begin{align}
	&\left[S_{F}(x_{k+1})\right]^{2} \leq \|F_{x_{k+1}}\|^{2} \nonumber\\
	&\leq  \left(4NL^2 + 4N\Hpsimax^2 + 2 H_{F,\max}\right)  \|x_{k+1} - x_{k}\|^2,  \label{eq:6}
	\vspace*{-0.1cm}
\end{align}
where we used that $\|\xkip-x_{k+1}\| \leq \|x_{k+1} - x_{k}\|$ and $x_{k+1} = x_{k} + d_{k}$. This proves our statement.\\

\textit{Proof of Lemma \ref{lemma:3}:} 	Since the  sequence $(x_{k})_{k\geq 0}$ is bounded, this implies that the set $\Omega(x_{0})$ is also bounded. Closeness of $\Omega(x_{0})$ also follows observing that $\Omega(x_{0})$ can be viewed as an intersection of closed sets, i.e., $\Omega(x_{0}) = \cap_{j \geq 0} \cup_{\ell \geq j} \{x_{k}\}$. Hence $\Omega(x_{0})$  is a compact set. Let us prove that $F$ is constant on $\Omega(x_{0})$.  From Lemma \ref{lemma:1}, we have: 
\vspace*{-0.1cm}
\begin{equation*}
	\sum_{j=0}^{k} \|x_{j+1} - x_{j}\|^2 \leq \dfrac{2}{\eta_{\min}}\left( F(x_{0}) - F^{*}\right)  < \infty.
	\vspace*{-0.1cm}
\end{equation*}
\noindent This implies that $\|x_{j+1} - x_{j}\| \to 0$ as $j\to \infty$. Hence, from \eqref{eq:6}, we have:
\vspace*{-0.1cm}
\begin{equation}
	\lim_{j \to \infty} \|F_{x_{j+1}}\| = 0. \label{eq:58}
	\vspace*{-0.1cm}
\end{equation}

\noindent Moreover, from Lemma \ref{lemma:1} we have that $(F(x_{k}))_{k\geq 0}$ is monotonically decreasing and since $F$ is assumed bounded from below by $F^{*} > - \infty$, it converges, let us say to $F_{*} > - \infty$, i.e. $F(x_{k}) \to F_{*}$ as $k \to \infty$, and   $F_{*}  \geq  F^{*}$.
On the other hand, let $x_{*}$ be a limit point of $(x_{k})_{k\geq 0}$, i.e. $x_{*} \in \Omega(x_{0})$. This means that there is a subsequence $(x_{\bar{k}})_{\bar{k}\geq 0}$ of $(x_{k})_{k\geq 0}$
such that $x_{\bar{k}} \to x_{*}$ as $\bar{k} \to \infty$. 
From the lower semicontinuity of $\phi$ we always have:
	$\lim_{\bar{k} \to \infty} \inf \phi(x_{\bar{k}}) \geq \phi(x_{*})$. 
On the other hand, since $\phi$ is convex, we have: 
\begin{equation*}
	\phi (x_{\bar{k}}) \leq \phi (x_{*}) + \langle \phi_{x_{\bar{k}}}, x_{\bar{k}} - x_{*} \rangle \leq \phi (x_{*}) + \|\phi_{x_{\bar{k}}}\| \|x_{\bar{k}} - x_{*}\|. 
\end{equation*}
Moreover, from \eqref{eq:58} we have that $(\phi_{x_{\bar{k}}})_{\bar{k} \geq 0}$ is bounded. Since $x_{\bar{k}} \to x_{*}$, we get:
\begin{equation*}
	\lim_{\bar{k} \to \infty} \sup \phi(x_{\bar{k}}) \leq \lim_{\bar{k} \to \infty} \sup \phi (x_{*}) + \|\phi_{x_{\bar{k}}}\| \|x_{\bar{k}} - x_{*}\| \leq \phi(x_{*}).
\end{equation*} 
\noindent Therefore, we have  $\phi(x_{\bar{k}}) \to \phi(x_{*})$ and since  $f$ and $\psi$ are continuous functions, it also follows that  $F(x_{\bar{k}}) \to F(x_{*})$. This implies that $F(x_{*}) = F_{*}$. Finally,  to prove that $0 \in \partial F(\Omega(x_{0}))$, one can note that when $\bar{k} \to \infty$, we have $x_{\bar{k}} \to x_{*}$ and $F(x_{\bar{k}}) \to F(x_{*})$. Moreover, from \eqref{eq:58}, $\|F_{x_{\bar{k}}}\| \to 0$. Then, from the definition of the limiting subdifferential it follows that $0 \in \partial F(x_{*})$. 	\\

\textit{Proof of Lemma \ref{lemma:rec}:} 	First, the inequality \eqref{eq:61} has been derived in  \cite{Nes:19} (Lemma 11). If $\alpha = 0$, we have:
\begin{align*}
	& \Delta_{k} - \Delta_{k+1} \geq c  \Delta_{k+1} \iff 
	\Delta_{k} \geq \left( 1 + c  \right) \Delta_{k +1} \nonumber  \\ 
	& \iff \Delta_{k+1} \leq \left(\frac{1}{1 + c }\right) \Delta_{k}.
\end{align*}	
\noindent This yields \eqref{eq:62}.  Finally, for the third  case, we have: 
\begin{equation*}
	\Delta_{k+1}\left(1 + c \Delta_{k+1}^{\alpha}\right) \leq \Delta_{k} \!\iff\! \Delta_{k+1} \leq \left(\dfrac{1}{1 + c \Delta_{k+1}^{\alpha}}\right) \Delta_{k}. 
\end{equation*}
These prove our statements. 	\\   



	
\end{document}